\newtheorem{definition}{Definition}
\newtheorem{theorem}[definition]{Theorem}
\newtheorem{lemma}[definition]{Lemma}
\newtheorem{corollary}[definition]{Corollary}
\begin{document}

\title{Isomorphism classification of infinite Sierpi\'{n}ski carpet graphs}

\classification{}
\keywords{Sierpi\'{n}ski carpet, graph isomorphism, cofinality, rooted graph.\\ \textbf{Mathematics Subject Classification (2010)}: 05C10, 05C60, 05C63, 05A16.}

\author{Daniele D'Angeli}{
address={Institut f\"{u}r mathematische Strukturtheorie (Math C), Technische Universit\"{a}t Graz, Steyrergasse 30, 8010 Graz, Austria.\footnote{The first author was supported by Austrian Science Fund project FWF P24028-N18.} \\ Email address: dangeli@math.tugraz.at}}

\author{Alfredo Donno}{
address={Universit\`{a} degli Studi Niccol\`{o} Cusano - Via Don Carlo Gnocchi, 3 00166 Roma, Italia. \\ Email address: alfredo.donno@unicusano.it}}

\begin{abstract}
For each infinite word over a given finite alphabet, we define an increasing sequence of rooted finite graphs,
that can be thought as approximations of the famous Sierpi\'{n}ski
carpet. These sequences naturally converge to an infinite rooted limit graph. We show that there are uncountably many classes of isomorphism of such limit
graphs, regarded as unrooted graphs.
\end{abstract}

\maketitle


\section{Introduction}
The famous Sierpi\'{n}ski carpet is a fractal introduced by
W. Sierpi\'{n}ski in 1916 \cite{courbe}, which can be considered as
a two-dimensional generalization of the Cantor set. Like the
well-known Sierpi\'{n}ski gasket, the carpet has a self-similar
structure: the former is a finitely ramified fractal (in Physics, a fractal is said to be finitely ramified if it can be disconnected by removing a finite number of points),
whereas the latter is an infinitely ramified fractal. Both the structures have been widely studied in the literature,
from several points of view. In particular, the study of critical
phenomena and physical models - the Ising model \cite{bonnier, ising, tutte2, gefen2, gefen3, solvable}, the dimer model
\cite{chang, dimeri}, the percolation model \cite{shinoda2,
percolation} - on the Sierpi\'{n}ski carpet and on the
Sierpi\'{n}ski gasket has been the focus of several works in the
last decades. We also want to mention the paper \cite{brownian},
where Brownian motion on some fractal sets generalizing the
Sierpi\'{n}ski carpet is studied. The Sierpi\'{n}ski carpet and
other fractal sets have also interesting applications in Telecommunications engineering as in the
construction of antennas and in the study of propagation phenomena (e.g.,
\cite{alessio}).

In this short paper, we associate with any infinite word, over a
finite alphabet, an increasing sequence of finite rooted graphs, which
represent a finite discrete approximation of the classical Sierpi\'{n}ski
carpet. Each of our sequences naturally converges to an infinite
rooted limit graph. We study the isomorphism properties of these
limit graphs, regarded as unrooted graphs, proving that there
exist uncountably many classes of isomorphism.

This work follows the paper \cite{io}, where the same problem is studied for a sequence of graphs
inspired by the Sierpi\'{n}ski gasket. An analogous
problem is studied in \cite{JMD}, where the sequence of Schreier
graphs associated with the action of the self-similar
Basilica group on the rooted binary tree is considered.

\section{Carpet graphs}

Let us start by fixing two finite alphabets $X=\{0,1,\ldots, 7\}$ and $Y=\{a,b,c,d\}$, and let $X^\infty=\{x_1x_2\ldots : x_i\in X\}$ be the set of all infinite words over the alphabet $X$. Let $C_4$ be the cyclic graph of length $4$ whose vertices will be denoted by $a, b, c, d$. We choose an embedding of this graph into the plane in such a way that all sides are parallel to the coordinate axes and $a$ is the left vertex of the bottom edge, and $b,c,d$ correspond to the other vertices by following the anticlockwise order (Fig. \ref{fig1}). \\

\textit{\textbf{ Construction of the graph $\Gamma_w$}}. Take an
infinite word $w=yx_1x_2\ldots \in Y\times X^\infty$. We denote by
$w_n$ the prefix $yx_1\ldots x_{n-1}$ of length $n$ of $w$.

\begin{definition}
The infinite carpet graph $\Gamma_{w}$ is the rooted
graph inductively constructed as follows.
\begin{itemize}
  \item \textbf{Step $1$.} The graph $\Gamma_w^1$ is the cyclic graph $C_4$ rooted at the vertex $y$.
  \item \textbf{Step $n\to n+1$.} Take $8$ copies of $\Gamma_{w}^n$ and glue them
  together on the model graph $\overline{\Gamma}$, in such a way that these copies occupy the positions indexed by $0,1,\ldots, 7$ in $\overline{\Gamma}$ (Fig. \ref{fig1}). Note that each copy shares at most one (extremal) side with
  any other copy. As a root for the new rooted graph $\Gamma_w^{n+1}$, we choose the root of the copy of $\Gamma_w^n$ occupying the position indexed by the letter $x_n$. We identify the root of $\Gamma_w^{n+1}$ with the finite word $w_{n+1}=yx_1\ldots x_n$.
  \item \textbf{Limit.} $\Gamma_{w}$ is the infinite rooted graph obtained as the limit of the sequence of finite rooted graphs
  $\{\Gamma_{w}^n\}_{n\geq 1}$, whose root is naturally identified with the infinite word $w$.
\end{itemize}
\end{definition}
\vspace{0.5cm}

The limit in the previous definition means that, for each $r>0$, there exists $n_0\in \mathbb{N}$ such that the ball $B_{\Gamma_w}(w,r)$ of radius $r$ rooted at $w$ in $\Gamma_w$ is isomorphic to the ball $B_{\Gamma_w^n}(w_n,r)$ of radius $r$ rooted at $w_n$ in $\Gamma_{w}^n$, for every $n\geq n_0$ (Gromov-Hausdorff topology).

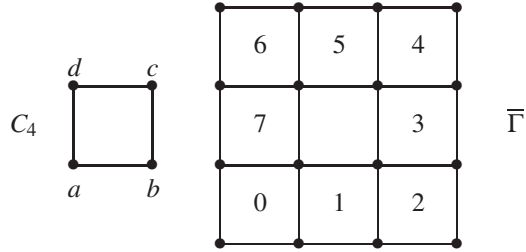
\begin{figure}[h]
\begin{picture}(230,120)\unitlength=0,15mm
\letvertex a=(30,70)\letvertex b=(100,70)\letvertex c=(100,140)\letvertex d=(30,140)

\letvertex e=(160,0)\letvertex f=(230,0)\letvertex g=(300,0) \letvertex h=(370,0)\letvertex i=(370,70)
\letvertex l=(370,140)\letvertex o=(230,210)\letvertex n=(300,210) \letvertex m=(370,210)
\letvertex p=(160,210)\letvertex q=(160,140)\letvertex r=(160,70)
\letvertex s=(230,70)\letvertex v=(230,140)\letvertex t=(300,70)\letvertex u=(300,140)
 \put(-25,100){$C_4$}   \put(415,100){$\overline{\Gamma}$}

\put(25,42){$a$} \put(25,149){$d$}\put(95,42){$b$}\put(95,149){$c$}

\drawvertex(a){$\bullet$}
\drawvertex(b){$\bullet$}\drawvertex(c){$\bullet$}\drawvertex(d){$\bullet$}

\drawvertex(e){$\bullet$}
\drawvertex(f){$\bullet$}\drawvertex(g){$\bullet$}\drawvertex(h){$\bullet$}\drawvertex(i){$\bullet$}
\drawvertex(l){$\bullet$}\drawvertex(m){$\bullet$}\drawvertex(n){$\bullet$}\drawvertex(o){$\bullet$}
\drawvertex(p){$\bullet$}\drawvertex(q){$\bullet$}\drawvertex(r){$\bullet$}\drawvertex(s){$\bullet$}
\drawvertex(t){$\bullet$}\drawvertex(u){$\bullet$}\drawvertex(v){$\bullet$}
\drawundirectededge(b,a){} \drawundirectededge(c,b){}
\drawundirectededge(a,d){} \drawundirectededge(c,d){}

\drawundirectededge(f,e){} \drawundirectededge(g,f){}
\drawundirectededge(h,g){} \drawundirectededge(i,h){}
\drawundirectededge(i,l){} \drawundirectededge(l,m){}
\drawundirectededge(n,o){} \drawundirectededge(m,n){}
\drawundirectededge(p,q){} \drawundirectededge(p,o){}
\drawundirectededge(q,r){} \drawundirectededge(r,e){}

\drawundirectededge(r,s){} \drawundirectededge(s,t){}\drawundirectededge(t,i){}
\drawundirectededge(f,s){}\drawundirectededge(s,v){} \drawundirectededge(v,o){}
\drawundirectededge(q,v){}\drawundirectededge(u,v){} \drawundirectededge(u,l){}
\drawundirectededge(g,t){}\drawundirectededge(u,t){} \drawundirectededge(u,n){}

\put(190,30){$0$}\put(190,100){$7$}\put(190,170){$6$}
\put(330,30){$2$}\put(330,100){$3$}\put(330,170){$4$}
\put(260,30){$1$}\put(260,170){$5$}
\end{picture}
\caption{The cyclic graph $C_4$ and the model graph $\overline{\Gamma}$.}\label{fig1}
\end{figure}
 \vspace{0.5cm}

Observe that, for all $v,w\in Y\times X^\infty$, the graph $\Gamma_v^n$ is isomorphic to $\Gamma_w^n$ as an unrooted graph. When we will refer to this unrooted graph, we will use the notation $\Gamma_n$. One can check that the number of vertices of $\Gamma_n$ is
$$
\frac{11}{70}8^n+\frac{8}{15}3^n+\frac{8}{7}.
$$

Moreover, note that two distinct finite words $v_n$ and $w_n$ may correspond to the same vertex of $\Gamma_n$. In order to detect such points, we need to define the set $B_n$ of the \textit{boundary vertices} of the graph $\Gamma_n$, for every $n\geq 2$: they are exactly all the vertices belonging to the four extremal sides of $\Gamma_n$. Note that the boundary vertices have degree less or equal to $3$, and they correspond to words of the following form:
\begin{itemize}
\item $w_n=yx_1\ldots x_{n-1}$, with $y\in \{a,b\}$, $x_i\in \{0,1,2\}$ (boundary vertices of the \textit{bottom} side);
\item $w_n=yx_1\ldots x_{n-1}$, with $y\in \{b,c\}$, $x_i\in \{2,3,4\}$ (boundary vertices of the \textit{right} side);
\item $w_n=yx_1\ldots x_{n-1}$, with $y\in \{c,d\}$, $x_i\in \{4,5,6\}$ (boundary vertices of the \textit{top} side);
\item $w_n=yx_1\ldots x_{n-1}$, with $y\in \{d,a\}$, $x_i\in \{6,7,0\}$ (boundary vertices of the \textit{left} side).
\end{itemize}

In particular, the vertices of degree $2$ are indexed by the words $a0^{n-1}, b2^{n-1}, c4^{n-1}, d6^{n-1}$. One can check that each extremal boundary side of $\Gamma_n$ contains $3^{n-1}+1$ vertices, so that the number of the boundary vertices of $\Gamma_n$ is $4\cdot 3^{n-1}$.

Boundary vertices are those in $\Gamma_n$ that can be possibly identified in $\Gamma_{n+1}$. For instance, it is easy to check that, if $w=a206\ldots$ and $v=d467\ldots$, then $w_3$ (resp. $v_3$) corresponds to a vertex on the bottom (resp. top) boundary side of $\Gamma_3$, but the words $w_4$ and $v_4$ correspond to the same vertex of $\Gamma_4$. Analogous considerations can be done for other boundary vertices. 

\begin{definition}
Two infinite words $v,w\in Y\times X^\infty$ are said to be \textit{cofinal} if they differ only for a finite number of letters.
\end{definition}

Cofinality is clearly an equivalence relation, that we will denote by $\sim$. Given $v,w\in Y\times X^\infty$, if there exists $n_0\in \mathbb{N}$ such that $v_n$ and $w_n$ correspond to the same vertex of the finite graph $\Gamma_n$ for every $n\geq n_0$, then $v=v_{n_0}u$ and $w=w_{n_0}u$, for some $u\in X^{\infty}$. On the other hand, it is not difficult to check that all the vertices belonging to the same infinite graph $\Gamma_w$, with $w\in Y\times X^\infty$, are cofinal with $w$. In what follows, we consider just one representative word for vertices that can be identified with different words.

For each $n\geq 2$, we inductively define the \textit{internal boundary} $I_b(n)$ of $\Gamma_n$ as follows. For $n=2$, $I_b(2)$ consists of the $4$ vertices of the middle square of $\Gamma_2$. For $n=3$, $I_b(3)$ consists of the $8\cdot 4$ vertices of $\Gamma_3$ corresponding to the images of the vertices of $I_b(2)$ in each copy of $\Gamma_2$ in the graph $\Gamma_3$, together with the $12$ vertices of the middle square of $\Gamma_3$. By recursion, $I_b(n)$ will consist of the vertices of $\Gamma_n$ coming from vertices of $I_b(n-1)$, together with the vertices of the \lq\lq new generation\rq\rq middle square of $\Gamma_n$.\\ \indent For every infinite word $w\in Y\times X^\infty$, we define the sequence $\{d_i^w\}_{i\geq 2}$ as
$$
d_i^w=\min \{d(w_i, v) \ : \ v\in I_b(i)\}.
$$
Here, $d$ denotes the usual geodesic graph distance.

\begin{lemma}\label{lemma}
Let $w,v\in Y\times X^\infty$.
\begin{enumerate}[1)]
\item $d_i^w=d_i^v$ for each $2\leq i\leq N$ if and only if there exists an isomorphism between $\Gamma^N_w$ and $\Gamma^N_v$ mapping $w_N$ to $v_N$.
\item Suppose that there exists $k\geq 2$ such that $d_k^w\neq d_k^v$. Then there exists $r>0$ such that the balls $B_{\Gamma_w}(w,r)$ and $B_{\Gamma_v}(v,r)$ are not isomorphic.
    \end{enumerate}
\end{lemma}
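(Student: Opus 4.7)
The plan is to handle the two parts separately. Both rest on the hierarchical self-similar structure of the carpet graphs and on the fact that the internal boundaries $I_b(i)$ are intrinsically distinguished vertex sets inside $\Gamma_N$.

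For the forward implication $(\Rightarrow)$ of (1), I would induct on $N$. The base case $N=2$ reduces to checking that $\Gamma_2$ admits the full dihedral symmetry group of the square and that its vertex orbits under this group are classified exactly by the distance to the central $4$-cycle $I_b(2)$ --- a finite explicit verification on $\Gamma_2$. For the step $N\to N+1$, applying the inductive hypothesis to $d_2^w=d_2^v,\ldots,d_N^w=d_N^v$ yields a rooted isomorphism $\phi^{\ast}\colon\Gamma_w^N\to\Gamma_v^N$ between the sub-copies containing the two roots. The additional equality $d_{N+1}^w=d_{N+1}^v$ forces these sub-copies to occupy corresponding slots in the eight-position arrangement $\overline{\Gamma}$ (both of corner type, or both of edge type), and one extends $\phi^{\ast}$ to a rooted isomorphism $\Gamma_w^{N+1}\to\Gamma_v^{N+1}$ by combining a dihedral symmetry of the eight-copy configuration with arbitrary unrooted isomorphisms on the remaining seven copies, ensuring compatibility along shared sides.

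For the reverse implication $(\Leftarrow)$ of (1), I would argue that any rooted isomorphism $\phi\colon\Gamma_w^N\to\Gamma_v^N$ preserves the hierarchical decomposition at every scale and in particular maps $I_b(i)$ to $I_b(i)$ for each $2\le i\le N$. This uses the fact that scale-$i$ middle squares --- cycles of length $4\cdot 3^{i-1}$ bounding the missing cell of a sub-copy of $\Gamma_i$ --- are canonical features of $\Gamma_N$ and hence permuted among themselves by any graph automorphism. Since $\phi$ is an isometry, the equalities $d_i^w=d_i^v$ drop out immediately.

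For (2), the strategy is to show that $d_k^w$ is an intrinsic invariant of the rooted ball structure around $w$ in $\Gamma_w$. A scale-$k$ middle square is distinguishable from middle squares of other scales by its specific cycle length $4\cdot 3^{k-1}$, so $I_b(k)$ can be recognized inside any sufficiently large ball in $\Gamma_w$; thus $d_k^w$ equals the graph-theoretic distance from $w$ to the nearest $I_b(k)$-vertex in $\Gamma_w$. If $d_k^w\ne d_k^v$, choose $r$ larger than $\max(d_k^w,d_k^v)$ and past the stabilization threshold at which $B_{\Gamma_w}(w,r)=B_{\Gamma_w^n}(w_n,r)$ for the relevant $n$; then the distance from the center to the nearest recognizable $I_b(k)$-vertex inside the ball is an intrinsic invariant which differs for $B_{\Gamma_w}(w,r)$ and $B_{\Gamma_v}(v,r)$, ruling out any isomorphism between the two rooted balls. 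The main obstacle throughout is the hierarchical rigidity: one must prove that every graph automorphism of $\Gamma_N$ --- and, in part (2), of any sufficiently large ball in $\Gamma_w$ --- must respect the nested decomposition into sub-copies at every scale, equivalently that the middle-square cycles of each generation are canonical subgraphs. Once this rigidity is available, the induction of part (1) and the ball-invariance argument of part (2) go through.
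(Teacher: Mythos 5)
Your overall architecture --- reduce everything to the claim that the sets $I_b(k)$ are canonical, i.e.\ preserved by every isomorphism --- is the right one, and it is in fact far more explicit than the paper, which disposes of part 1) with the phrase ``it follows from an easy geometric argument'' and derives part 2) by restricting a hypothetical rooted isomorphism $\Gamma_w\to\Gamma_v$ to the sub-copies isomorphic to $\Gamma_w^k$ and $\Gamma_v^k$ and then invoking part 1) --- a reduction that silently relies on exactly the rigidity you identify as the main obstacle. However, the one concrete justification you offer for that rigidity does not work. First, the middle square of scale $k$ is a cycle of length $4\cdot 3^{k-2}$, not $4\cdot 3^{k-1}$: each of its four sides is an extremal side of a copy of $\Gamma_{k-1}$ and hence carries $3^{k-2}+1$ vertices. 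More seriously, cycle length cannot single out the middle squares. For $k=2$ they are $4$-cycles, exactly like the $8^{N-1}$ elementary cells of $\Gamma_N$; and for general $k$ the outer perimeter of every sub-copy of $\Gamma_{k-1}$ is a chordless cycle of the same length $4\cdot 3^{k-2}$ as a scale-$k$ middle square. What actually distinguishes a middle square is that it bounds an empty region (a face of the planar embedding containing no vertices of the graph), and converting that into a purely graph-theoretic criterion --- or, more economically, proving by induction that $\mathrm{Aut}(\Gamma_N)$ is exactly the dihedral group $G$ of order $8$, which then preserves each $I_b(k)$ for free --- is the step that must replace your cycle-length test.

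A second, smaller gap sits in the inductive step of part 1), direction $(\Rightarrow)$: you cannot extend an arbitrary rooted isomorphism $\phi^{\ast}$ of the two sub-copies to $\Gamma^{N+1}$ by ``combining a dihedral symmetry of the eight-copy configuration with arbitrary unrooted isomorphisms on the remaining seven copies,'' because the compatibility along all shared sides forces the assembled map to be the restriction of a single global symmetry; the stabilizer of a slot in the dihedral group has order $2$, so only two of the eight self-isomorphisms of a sub-copy arise as such restrictions. You must therefore argue that the coset of rooted isomorphisms $\Gamma_w^N\to\Gamma_v^N$ contains at least one that does extend, or equivalently prove directly that the vector $(d_2,\dots,d_N)$ separates the $G$-orbits of vertices of $\Gamma_N$. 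Once the rigidity statement is in place, your part 2) argument (the distance from the root to the nearest recognizable $I_b(k)$-vertex is an invariant of a sufficiently large rooted ball) is a legitimate alternative to the paper's restriction argument, and has the genuine advantage of not needing to promote a family of ball isomorphisms to an isomorphism of the limit graphs before deriving the contradiction.
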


\begin{proof}
\begin{enumerate}[1)]
\item It follows from an easy geometric argument.
\item Suppose, by contradiction, that there exists an isomorphism $\phi$ between $\Gamma_w$ and $\Gamma_v$ such that $\phi(w)=v$. Then, the restriction of $\phi$ to the subgraph of $\Gamma_w$ rooted at $w$, and isomorphic to the graph $\Gamma^k_w$ rooted at $w_k$, is also an isomorphism of rooted graphs. The absurd follows from 1).
    \end{enumerate}
\end{proof}
From Lemma \ref{lemma} we obtain the following classification result. We are now considering the graphs $\{\Gamma_w:  w\in Y\times X^\infty\}$ as unrooted graphs and we ask whether two such graphs are isomorphic.
\begin{theorem}
Let $v=yx_1x_2\ldots,w=y'x_1'x_2'\ldots\in Y\times X^\infty$ and let $G$ be the subgroup of $Sym(X)$ generated by the permutations $\{(1357)(2460),(04)(13)(57)\}$, isomorphic to the dihedral group of order $8$. Then
$\Gamma_v\simeq\Gamma_w$ if and only if there exists $\sigma\in G$ such that
$$
x_1'x_2'\ldots \sim \sigma(x_1x_2\ldots) :=\sigma(x_1) \sigma(x_2)\ldots.
$$
\end{theorem}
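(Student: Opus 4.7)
The plan is to prove the two implications separately, using Lemma~\ref{lemma} to pass between the infinite graphs and their finite approximations and exploiting the fact that $G$ realizes the dihedral symmetries of the model graph $\overline{\Gamma}$.

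For the implication ($\Leftarrow$), let $\sigma\in G$ be such that $x'_1x'_2\ldots\sim\sigma(x_1)\sigma(x_2)\ldots$, and set $v^\sigma:=y\,\sigma(x_1)\sigma(x_2)\ldots\in Y\times X^\infty$; the choice of the first letter $y$ is immaterial, since $\Gamma_v$ viewed as an unrooted graph depends only on the sequence $x_1x_2\ldots$. Because $\sigma$ is a symmetry of $\overline{\Gamma}$, an induction on $n$ yields automorphisms $\alpha_n$ of the unrooted graph $\Gamma_n$ that send the sub-copy at position $i$ to the sub-copy at position $\sigma(i)$ and act recursively inside. These $\alpha_n$ intertwine the inclusions $\Gamma_v^n\hookrightarrow\Gamma_v^{n+1}$ and $\Gamma_{v^\sigma}^n\hookrightarrow\Gamma_{v^\sigma}^{n+1}$, so they pass to an unrooted isomorphism $\Gamma_v\simeq\Gamma_{v^\sigma}$ of the direct limits. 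Since $x'\sim\sigma(x)$, the words $v^\sigma$ and $w$ are cofinal and thus represent (possibly distinct) vertices of a common infinite graph; hence $\Gamma_{v^\sigma}=\Gamma_w$ as unrooted graphs, and composing gives $\Gamma_v\simeq\Gamma_w$.

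For the implication ($\Rightarrow$), suppose $\phi\colon\Gamma_v\to\Gamma_w$ is an unrooted isomorphism and set $u:=\phi(v)$. Every vertex of $\Gamma_w$ is represented by a word cofinal with $w$, so pick such a representative $\tilde w=y^{\star}x^{\star}_1x^{\star}_2\ldots$ for $u$; then $(\Gamma_w,u)=(\Gamma_{\tilde w},\tilde w)$ as rooted graphs and $x^{\star}\sim x'$. The map $\phi$ becomes a rooted isomorphism $(\Gamma_v,v)\to(\Gamma_{\tilde w},\tilde w)$, and therefore restricts to rooted isomorphisms of all balls around $v$ and $\tilde w$; the contrapositive of part~2) of Lemma~\ref{lemma} yields $d_k^v=d_k^{\tilde w}$ for all $k\geq 2$. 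Part~1) of Lemma~\ref{lemma} then produces, for every $N\geq 2$, a rooted isomorphism $\Psi_N\colon(\Gamma_v^N,v_N)\to(\Gamma_{\tilde w}^N,\tilde w_N)$. Identifying both unrooted graphs with $\Gamma_N$, $\Psi_N$ is an automorphism of $\Gamma_N$ sending $v_N$ to $\tilde w_N$, hence induced by some $\sigma_N\in G$ with $x^{\star}_i=\sigma_N(x_i)$ for $1\leq i<N$. Because $G$ is finite, the pigeonhole principle supplies one $\sigma\in G$ with $\sigma_N=\sigma$ for infinitely many $N$, and for this $\sigma$ we obtain $x^{\star}_i=\sigma(x_i)$ for every $i\geq 1$. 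Combined with $x^{\star}\sim x'$, this gives $x'\sim\sigma(x)$.

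The hard part is the rigidity statement used in the ($\Rightarrow$) direction: every automorphism of the finite carpet $\Gamma_N$ is induced by an element of $G$ acting on coordinate addresses through $yx_1\ldots x_{N-1}\mapsto\bar\sigma(y)\sigma(x_1)\ldots\sigma(x_{N-1})$, where $\bar\sigma$ is the induced permutation of $Y$. Equivalently, the only graph-automorphisms of the finite Sierpi\'nski carpet approximations are the eight dihedral symmetries of the square. I would prove this by induction on $N$, exploiting the fact that the only vertices of degree $2$ in $\Gamma_N$ are the four ``corners'' $a0^{N-1},b2^{N-1},c4^{N-1},d6^{N-1}$, which any automorphism must permute among themselves; once their images are fixed, the automorphism must preserve the decomposition into the eight sub-copies of $\Gamma_{N-1}$, and the inductive hypothesis applied inside each sub-copy pins down the rest.
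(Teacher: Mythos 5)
Your proposal is correct and follows essentially the same route as the paper: both directions rest on the lemma relating the distance sequences $d_i^w$ to rooted isomorphisms of the finite approximations, together with the rigidity fact that $G$ is the full automorphism group of each $\Gamma_n$, acting letterwise on addresses. You simply supply more detail than the paper does --- an explicit compatible system of automorphisms $\alpha_n$ passing to the limit in the backward direction, a direct pigeonhole argument in place of the paper's argument by contradiction in the forward direction, and a sketch of the rigidity claim that the paper merely asserts via ``an easy geometric argument.''
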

\begin{proof}
Let $\phi:\Gamma_v\rightarrow\Gamma_w$ be an isomorphism and suppose that $\phi(v)=u\in \Gamma_w$, with $B_{\Gamma_v}(v,r)\simeq B_{\Gamma_w}(u,r)$ for every $r>0$. Let $u= \overline{y}\,\overline{x}_1\overline{x}_2\ldots$ and suppose that $\sigma(x_1x_2\ldots)$ is not cofinal to $\overline{x}_1\overline{x}_2\ldots$, for every $\sigma\in G$. Observe that $G$ is the group of isometries of $\Gamma_w^n$, for each $n$. Then, by construction of the $\Gamma_w$'s and an easy geometric argument, we deduce that there exist infinitely many positive integers $\{i_k\}_{k\geq 1}$ such that $d_{i_k}^u\neq d_{i_k}^v$. Lemma \ref{lemma} implies that there exists $r'>0$ such that $B_{\Gamma_v}(v,r')\not\simeq B_{\Gamma_w}(u,r')$. A contradiction.

Vice versa, suppose that $\sigma(x_1x_2\ldots)=\overline{x}_1\overline{x}_2\ldots$ for some $\sigma\in G$. Put $\phi(v)=\overline{y}\sigma(x_1x_2\ldots)=u$. Since $G$ is the group of isometries of $\Gamma_n$, the distances $d_n^v$ and $d^u_n$ coincide for every $n$. This ensures that $\phi$ is an isomorphism.
\end{proof}

\begin{corollary}
There exist uncountably many isomorphism classes of graphs $\Gamma_w$, $w\in Y\times X^\infty$, regarded as unrooted graphs.
\end{corollary}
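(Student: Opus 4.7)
The plan is a straightforward cardinality argument built directly on the theorem just proved. That theorem identifies the isomorphism classes of $\{\Gamma_w : w\in Y\times X^\infty\}$ (as unrooted graphs) with the orbits of the equivalence relation on $Y\times X^\infty$ generated by cofinality $\sim$ together with the letterwise action of the finite group $G$ on the tail $x_1x_2\ldots$. So it suffices to show that the set of such orbits is uncountable.

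The first step is to note that $|Y\times X^\infty| = 4 \cdot 8^{\aleph_0} = 2^{\aleph_0}$. The second step is to bound each equivalence class from above. Fix $w\in Y\times X^\infty$; its cofinality class consists of those words that differ from $w$ in only finitely many positions. Such a word is determined by a finite subset $F\subset \mathbb{N}$ (the positions where it differs from $w$) together with the replacement letters (finitely many choices from the finite alphabet $Y\cup X$), and there are only countably many pairs of this form. Hence each cofinality class is at most countable. Since $|G|=8$, enlarging by the $G$-action multiplies the size by at most $8$, so each combined equivalence class remains countable.

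The third and final step is the standard observation that a set of cardinality $2^{\aleph_0}$ cannot be written as a countable union of countable sets, so the number of equivalence classes must itself be uncountable. This immediately yields the corollary.

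The only point that needs verification (but is essentially trivial) is that the combined relation on $Y\times X^\infty$ is well-defined: the $G$-action on $X^\infty$ is letterwise and therefore preserves $\sim$, so the theorem's condition \textquotedblleft$\sigma(x_1x_2\ldots)\sim x_1'x_2'\ldots$ for some $\sigma\in G$\textquotedblright\ really does define an equivalence relation whose orbits are the objects to be counted. No step in the argument presents a genuine obstacle, since the substantive geometric work has already been done in Lemma \ref{lemma} and in the theorem preceding the corollary.
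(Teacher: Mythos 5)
Your proof is correct and is precisely the counting argument the paper leaves implicit (the corollary is stated without proof as an immediate consequence of the theorem): each combined equivalence class is countable while $Y\times X^\infty$ has cardinality $2^{\aleph_0}$, so there must be uncountably many classes. The only cosmetic remark is that the theorem's criterion ignores the initial letter $y$ entirely, which merely inflates each class by a further factor of at most $4$ and does not affect countability.
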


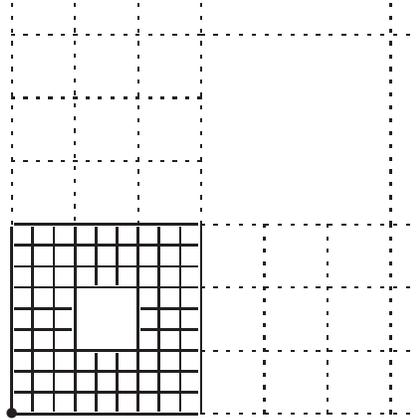
\begin{figure}[h]
\begin{picture}(230,190)\unitlength=0,14mm   \thinlines{
\letvertex a=(40,40)\letvertex b=(60,40)\letvertex c=(80,40)\letvertex d=(100,40)
\letvertex e=(120,40)\letvertex f=(140,40)\letvertex g=(160,40) \letvertex h=(180,40)\letvertex i=(200,40)
\letvertex l=(220,40)

\letvertex A=(40,220)\letvertex B=(60,220)\letvertex C=(80,220)\letvertex D=(100,220)
\letvertex E=(120,220)\letvertex F=(140,220)\letvertex G=(160,220) \letvertex H=(180,220)\letvertex I=(200,220)
\letvertex L=(220,220)

\letvertex AA=(40,200)\letvertex BB=(40,180)\letvertex CC=(40,160)\letvertex DD=(40,140)
\letvertex EE=(40,120)\letvertex FF=(40,100)\letvertex GG=(40,80) \letvertex HH=(40,60)

\letvertex aa=(220,200)\letvertex bb=(220,180)\letvertex cc=(220,160)\letvertex dd=(220,140)
\letvertex ee=(220,120)\letvertex ff=(220,100)\letvertex gg=(220,80) \letvertex hh=(220,60)

\letvertex Ee=(120,160)\letvertex Ff=(140,160) \letvertex eE=(120,100)
\letvertex fF=(140,100)\letvertex M=(100,140)\letvertex N=(100,120)
\letvertex O=(160,140)\letvertex P=(160,120)

\drawvertex(a){$\bullet$}

\drawundirectededge(a,A){} \drawundirectededge(b,B){}
\drawundirectededge(c,C){} \drawundirectededge(d,D){}
\drawundirectededge(G,g){} \drawundirectededge(h,H){}
\drawundirectededge(i,I){} \drawundirectededge(l,L){}

\drawundirectededge(A,L){} \drawundirectededge(AA,aa){}
\drawundirectededge(BB,bb){} \drawundirectededge(CC,cc){}
\drawundirectededge(FF,ff){} \drawundirectededge(GG,gg){}
\drawundirectededge(HH,hh){} \drawundirectededge(a,l){}

\drawundirectededge(DD,M){} \drawundirectededge(O,dd){}\drawundirectededge(EE,N){}
\drawundirectededge(P,ee){}

\drawundirectededge(E,Ee){}
\drawundirectededge(F,Ff){}\drawundirectededge(e,eE){} \drawundirectededge(f,fF){}     }


\letvertex a=(40,220)\letvertex d=(100,220)
\letvertex g=(160,220) \letvertex l=(220,220)

\letvertex A=(40,400)\letvertex D=(100,400)\letvertex G=(160,400) \letvertex L=(220,220)

\letvertex CC=(40,340)\letvertex FF=(40,280)

\letvertex cc=(220,340)\letvertex ff=(220,280)

\dashline[0]{4}(100,430)(100,200) \dashline[0]{4}(40,430)(40,220)\dashline[0]{4}(160,430)(160,220)\dashline[0]{4}(220,220)(220,430)
\dashline[0]{4}(40,400)(220,400)\dashline[0]{4}(40,340)(220,340)\dashline[0]{4}(220,280)(40,280)

\dashline[0]{4}(280,220)(280,40) \dashline[0]{4}(220,220)(220,40)\dashline[0]{4}(340,220)(340,40)\dashline[0]{4}(400,40)(400,220)
\dashline[0]{4}(220,220)(430,220)\dashline[0]{4}(220,160)(430,160)\dashline[0]{4}(430,100)(220,100) \dashline[0]{4}(430,40)(220,40)

\dashline[0]{4}(220,400)(430,400) \dashline[0]{4}(400,430)(400,220)

\end{picture}
\caption{The rooted graph $\Gamma_{a0^\infty}$.}
\end{figure}


\bibliographystyle{aipproc}   

\begin{thebibliography}{99}

\bibitem{brownian} M.~T.~Barlow, and R.~F.~Bass, Brownian motion and
harmonic analysis on Sierpinski carpets, \emph{Canad. J. Math. }
{\bf 51}, no. 4, 673--744 (1999).

\bibitem{bonnier} B.~Bonnier, Y.~Leroyer, and C.~Meyers, Critical exponents for Ising-like systems on
Sierpinski carpets, \emph{J. Physique} {\bf 48},
553--558 (Avril 1987).

\bibitem{chang} S.-C.~Chang, and L.-C.~Chen, Dimer-monomer model on the Sierpinski
gasket, \emph{Physica A: Statistical Mechanics and its
Applications} {\bf 387}, Issue 7, 1551--1566 (2008).

\bibitem{io} D.~D'Angeli, Horofunctions on Sierpi\'{n}ski type triangles, \emph{Utilitas Math.} (accepted for publication).

\bibitem{JMD} D.~D'Angeli, A.~Donno, M.~Matter, and T.~Nagnibeda, Schreier graphs of the
Basilica group, \emph{J. Mod. Dyn.} {\bf 4}, no. 1,
167--205 (2010).

\bibitem{dimeri} D.~D'Angeli, A.~Donno, and T.~Nagnibeda, Counting dimer coverings on self-similar Schreier
graphs, \emph{European J. Combin.} {\bf 33}, no. 7,
1484--1513 (2012).

\bibitem{ising} D.~D'Angeli, A.~Donno, and T.~Nagnibeda, Partition functions of the Ising model on some self-similar Schreier
graphs, in \emph{Progress in Probability: Random Walks, Boundaries
and Spectra} {\bf
64}, edited by D.~Lenz, F.~Sobieczky and W.~Woess, Springer Basel (2011), pp. 277--304.

\bibitem{tutte2} A.~Donno, and D.~Iacono, The Tutte polynomial of the Sierpi\'{n}ski and Hanoi graphs, \emph{Adv. Geom.}, Vol. 13, Issue 4, 663--694 (2013).

\bibitem{gefen2} Y.~Gefen, A.~Aharony, Y.~Shapir, and B.~B.~Mandelbrot, Phase transitions on fractals. II. Sierpi\'{n}ski gaskets, \emph{J. Phys.
A} {\bf 17}, no. 2, 435--444  (1984).

\bibitem{gefen3} Y.~Gefen, A.~Aharony, and B.~B.~Mandelbrot, Phase
transitions on fractals. III. Infinitely ramified lattices,
\emph{J. Phys. A} {\bf 17}, no. 6, 1277--1289  (1984).

\bibitem{shinoda2} M.~Shinoda, Percolation on the pre-Sierpi\'{n}ski gasket, \emph{Osaka
J. Math.} {\bf 33}, no. 2, 533--554  (1996).

\bibitem{percolation} M.~Shinoda, Existence of phase transition of percolation on Sierpi\'{n}ski
carpet lattices, \emph{J. Appl. Probab.} {\bf 39}, no. 1,
1--10  (2002).

\bibitem{courbe} W.~Sierpi\'{n}ski, Sur une courbe cantorienne qui contient une image
biunivoque et continue de toute courbe donn\'{e}e, \emph{C. R. Acad.
Sci. Paris}, \textbf{162}, 629--642  (1916).

\bibitem{alessio} D.~H.~Werner, D.~Baldacci, and P.~L.~Werner, An Efficient Recursive Procedure for Evaluating the
Impedance Matrix of Linear and Planar Fractal Arrays, \emph{IEEE
Transactions on antennas and propagation} {\bf 52}, no. 2,
380--387  (2004).

\bibitem{solvable} Z.~R.~Yang, Solvable Ising model on
Sierpi\'{n}ski carpets: The partition function, \emph{Physical
review E} {\bf 49}, no. 3, 2457--2460  (1994).
\end{thebibliography}

 \end{document}